\documentclass[a4paper,10pt]{article}

\usepackage[latin5]{inputenc}
\usepackage{amsfonts}
\usepackage{amssymb}
\usepackage{graphicx}
\usepackage{amsmath}
\usepackage{pstricks}
\usepackage{psfrag}
\usepackage{cite}
\usepackage{bbm}
\usepackage{amsthm}
\usepackage{pstricks,pst-node,pst-tree}
\usepackage[all]{xy}
\usepackage{amsfonts}
\usepackage{amssymb}
\usepackage{graphicx}
\usepackage{amsmath}
\usepackage{pstricks}
\usepackage{psfrag}
\usepackage{xypic}
\usepackage{cite}
\usepackage[all]{xy}
\usepackage{lscape}
\usepackage{bbm}
\newcounter{rc}

\newcounter{theorem}

\newtheorem{definition}[rc]{Definition}
\newtheorem{example}[rc]{Example}

\newtheorem{proposition}[rc]{Proposition}

\bibliographystyle{abbrv}
\bibliographystyle{plain}
\makeatletter
\oddsidemargin 0.5cm \evensidemargin 0cm \voffset -1cm \topmargin
0cm \headheight 0.5cm \headsep 1.5cm \textheight 22.5cm \textwidth
15.5cm \marginparwidth 0cm \marginparsep 0cm

\renewcommand{\thefigure}{\arabic{section}.\arabic{figure}} \setcounter{rc}{0}
\renewcommand{\therc}{\arabic{section}.\arabic{rc}} \setcounter{rc}{0}

\begin{document}

\title{\textbf{ON $p-$ADIC INTEGERS AND THE ADDING MACHINE GROUP}}
\author{B\"{u}nyamin DEM\.{I}R\footnote{Department of Mathematics, Anadolu University, 26470, Eski\c{s}ehir, Turkey, e-mails: bdemir@anadolu.edu.tr} \  and Mustafa SALTAN\footnote{Department of Mathematics, Anadolu University, 26470, Eski\c{s}ehir, Turkey, e-mails: mustafasaltan@anadolu.edu.tr}}
\date{}

\maketitle

\thispagestyle{empty}
\begin{abstract}
In this paper, we define a natural
metric on $Aut(X^{*})$ and
prove that the closure of the adding machine group, a subgroup of the automorphism group, is both isometric and isomorphic to the group of $p-$adic integers. So, we show that the group of $p-$adic integers can be isometrically embedded into the metric space
$Aut(X^{*})$.
\end{abstract}

\pagestyle{myheadings}
\let\thefootnote\relax\footnotetext{Key words and phrases: $p-$adic integers, The adding machine group, Automorphism group, $p-$ary rooted tree.}
\let\thefootnote\relax\footnotetext{2000 Mathematics Subject Classification: 20E08, 11E95.}
\begin{center}

\section{Introduction}
\end{center}
In recent years, there are many works on self-similar automorphism groups of the rooted tree $X^{*}$ (\cite{LGN}, \cite{Gri}, \cite{Nekrashevych}). The adding machine group is a typical example for self-similarity. We denote this group by $A$. $A$ is a cyclic group generated by
\begin{equation*}
a=(\underset{p-1 \ \text{times}}{\underbrace{1,1,\ldots,1}},a)\sigma
\end{equation*}%
where $a$ is an automorphism of the $p-$ary rooted tree and $\sigma =(012\ldots (p-1))$  is a permutation on $X=\{0,1,2,\ldots,(p-1)\}$. Thus, $A$ is isomorphic to $\mathbb{Z}$. On the other hand, one can consider the automorphism $a$ as adding one to a $p-$adic integer. That is why the term adding machine is used (\cite{Gri}). In \cite{Holly}, $p-$adic integers is pictured on a tree. This picture serves that any ultrametric space can be drawn on a tree.

In this paper, we equip $Aut(X^{*})$ with a natural metric and prove that the group of $p-$adic integers is both isometric and isomorphic to the closure $\overline{A}$ of the adding machine group, a subgroup of the automorphism group of the $p-$ary rooted tree.

First we recall basic definitions and notions.

\noindent
\textit{$p-adic$ integers:}
A  $p-$adic integer is a formal series
\begin{equation*}
\sum_{i\geq0}a_{i}p^{i}
\end{equation*}
where each $a_{i}\in \{0,1,2,\ldots, (p-1)\}$ and the set of all  $p-$adic integers is denoted by $\mathbb{Z}_{p}$.

Suppose that $a=\sum_{i\geq 0}a_{i}p^{i}$ and $b=\sum_{i\geq 0}b_{i}p^{i}$ be
elements of $\mathbb{Z}_{p}$. Then $a$ addition with $b$, $c=\sum_{i\geq 0}c_{i}p^{i}$,  is determined for each $m\in \{0,1,2,\ldots\}$ by
\begin{equation*}
\sum_{i= 0}^{m}c_{i}p^{i}\equiv\sum_{i= 0}^{m}(a_{i}+b_{i})p^{i} \ \
\ (mod\ p^{m+1})
\end{equation*}
where $c_{i}\in \{0,1,\ldots,(p-1)\}$.  $\mathbb{Z}_{p}$ is a group under this operation and is called the group of $p-$adic integers.

Let $a=\sum_{i\geq0}a_{i}p^{i} $ be an element of $\mathbb{Z}%
_{p}$ and $a\neq0$. Thus, there is a first index $v(a)\geq 0$ such that $%
a_{v}\neq0$. This index is called the order of $a$ and is
denoted by $ord_{p}(a)$. If $a_{i}=0$ for $i=0,1,2,\ldots$ then $%
ord_{p}(a)=\infty$. On the other hand, the $p-$adic value of $a$ is denoted by
\begin{equation*}
|a|_{p}=\left\{
\begin{array}{lll}
0 & , & \text{if} \ a_{i}=0 \ \text{for} \ i=0,1,2,\ldots, \\
p^{-ord_{p}(a)} & , & \text{otherwise}%
\end{array}%
\right.
\end{equation*}
and $d_{p}=|a-b|_{p}$ for $a,b \in \mathbb{Z}_{p}$ is a metric on $ \mathbb{Z}_{p}$ (for details see \cite{Fernando}, \cite{Robert} and \cite{Sch}).

\noindent
\textit{The automorphism group of the rooted tree:}
Let $X$ be a finite set
(alphabet) and let
\begin{equation*}
\begin{array}{c}
X^{\ast }=\{x_{1}x_{2}\ldots x_{n} \text{ }|\text{ }x_{i}\in X,n\geqslant 0\text{
}\}%
\end{array}%
\end{equation*}%
be the set of all finite words. The length of a word $v=x_{1}x_{2}\ldots x_{n}\in
X^{\ast }$ is the number of its letters and is denoted by $|v|$. The product of
$v_{1},v_{2}\in X^{\ast }$ is naturally defined by concatenation $v_{1}v_{2}$%
. One can think of $X^{\ast }$ as vertex set of a rooted tree.
\begin{figure}[h]
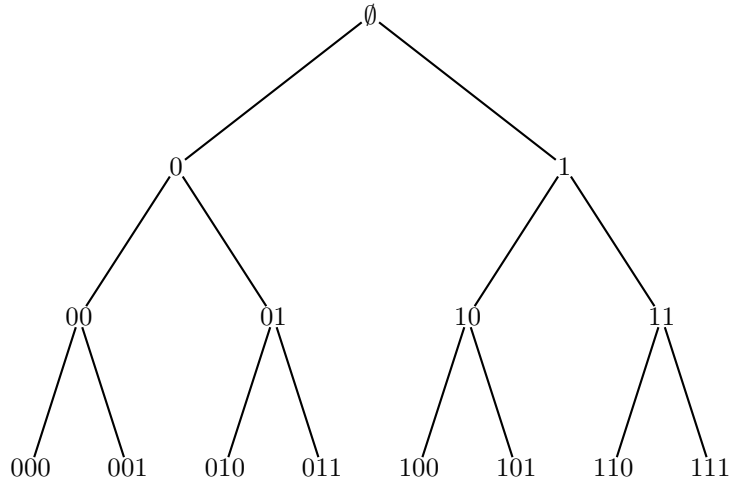

\label{tree} \centering
\pstree[nodesep=0.75pt]{\TR[name=R]{$\emptyset$} }{
       \pstree{ \TR{0} }{
                    \pstree{ \TR{00} } {
                                 \TR[name=d3]{000}
                                 \TR{001}
                              }
                    \pstree{ \TR{01} } {
                                 \TR{010}
                                 \TR{011}
                              }
                              }
        \pstree{ \TR{1} }{
                   \pstree { \TR{10} } {
                                \TR{100}
                                \TR{101}
                               }
                   \pstree { \TR{11} } {
                                \TR{110}
                                \TR[name=d2]{111}
                               }
                               }
                         }
\caption{The first three levels of the binary rooted tree $X^{\ast }$ for $%
X=\{0,1\}$}
\end{figure}

The set $X^{n}=\{v\in X^{\ast } \text{ }| \text{ } |v|=n \}$ is called the $nth$ level of $X^{\ast }$. The empty word $\emptyset$ is the root
of the tree $X^{\ast }$. Two words are connected by an edge if and only if they
are of the form $v,vx $ where $v\in X^{\ast }$ and $x\in X$.

A map $f:X^{\ast }\rightarrow X^{\ast }$ is an endomorphism of the tree $%
X^{\ast }$ if it preserves the root and adjacency of the vertices. An
automorphism is a bijective endomorphism. The group of all automorphisms of
the tree $X^{\ast }$ is denoted by $Aut(X^{\ast })$.

If $G\leq Aut(X^{\ast })$ is an automorphism group of the rooted tree $%
X^{\ast }$ then for $v\in X^{\ast }$, the subgroup
\begin{equation*}
G_{v} =\{g\in G\text{ }|\text{ } g(v)=v\}
\end{equation*}
is called the vertex stabilizer. The $nth$ level
stabilizer is the subgroup
\begin{equation*}
St_{G}(n) =\bigcap_{v\in X^{n}} G_{v}.
\end{equation*}
We need a useful way to express automorphisms of the rooted tree $ X^{\ast }$. For this aim, we give a definition and a proposition from \cite{Nekrashevych}.
\begin{definition}[\cite{Nekrashevych}]
Let $H$ be a group acting (from the right) by permutations on a set $X$ and let $G$ be an arbitrary group. Then the (permutational) wreath product $G\wr H$ is the semi-direct product $G^{X}\rtimes H$, where $H$ acts on the direct power $G^{X}$ by the respective permutations of the direct factors.
\end{definition}
Let $|X|=d$. The multiplication rule for the elements $(g_{1},g_{2},...,g_{d})h\in G\wr H$ is given by the formula
\begin{equation*}
(g_{1},g_{2},...,g_{d})\alpha(h_{1},h_{2},...,h_{d})\beta=(g_{1}h_{\alpha(1)},g_{2}h_{\alpha(2)},...,g_{d}h_{\alpha(d)})\alpha \beta
\end{equation*}
where $g_{i},h_{i}\in G,\alpha,\beta\in H$ and $\alpha(i)$ is the image of $i$ under the action of $\alpha$.
\begin{proposition}[\cite{Nekrashevych}]
Denote by $S(X)$ the symmetric group of all permutations of $X$. Fix some indexing $\{x_{1},x_{2},...,x_{d}\}$ of $X$. Then we have an isomorphism
\begin{equation*}
\psi:Aut(X^{\ast })\rightarrow Aut(X^{\ast})\wr S(X),
\end{equation*}
given by
\begin{equation*}
\psi(g)=(g|_{x_{1}},g|_{x_{2}},...,g|_{x_{d}})\alpha,
\end{equation*}
where $\alpha$ is the permutation equal to the action of $g$ on $X\subset X^{\ast}.$
\end{proposition}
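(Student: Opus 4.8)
The plan is to exhibit $\psi$ explicitly and then verify, in turn, that it is well-defined, a group homomorphism, and a bijection. First I would make precise the notion of \emph{section} (restriction) underlying the formula. Since an automorphism $g$ fixes the root and preserves adjacency, it maps each level $X^{n}$ bijectively onto itself (the level of a vertex is its distance from the root, and any root-fixing tree automorphism preserves distances); in particular $g$ restricts to a permutation $\alpha$ of the first level $X^{1}=X$, which is the $\alpha$ appearing in the statement. For each $x\in X$ I would then define $g|_{x}:X^{*}\to X^{*}$ by the relation $g(xw)=\alpha(x)\,g|_{x}(w)$ for all $w\in X^{*}$; such a value exists and is unique because $g(xw)$ lies in the subtree below $g(x)=\alpha(x)$ and hence has $\alpha(x)$ as its first letter.

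Second, I would check that each $g|_{x}$ is itself an automorphism of $X^{*}$, so that $\psi$ genuinely lands in $Aut(X^{*})\wr S(X)$. It fixes the root, since $g(x)=\alpha(x)$ forces $g|_{x}(\emptyset)=\emptyset$. It preserves adjacency: $xw$ and $xwy$ are adjacent in $X^{*}$, so their $g$-images $\alpha(x)\,g|_{x}(w)$ and $\alpha(x)\,g|_{x}(wy)$ are adjacent and share the first letter $\alpha(x)$, which forces $g|_{x}(w)$ and $g|_{x}(wy)$ to differ in a single final letter, hence to be adjacent. Finally it is bijective, with inverse $(g^{-1})|_{\alpha(x)}$, as a short computation using $g\,g^{-1}=\mathrm{id}$ confirms.

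Third comes the homomorphism property, and here I must fix the composition convention so as to match the given wreath-product multiplication. Since that product is defined with $S(X)$ acting from the right, I would write automorphisms as acting on the right and let $gh$ mean ``apply $g$, then $h$.'' Writing $\beta$ for the first-level permutation of $h$, a direct computation of the section of a product gives
\[
(xw)^{gh}=\big(x^{\alpha}\,w^{g|_{x}}\big)^{h}=x^{\alpha\beta}\,w^{(g|_{x})(h|_{x^{\alpha}})},
\]
so the first-level permutation of $gh$ is $\alpha\beta$ and $(gh)|_{x}=(g|_{x})(h|_{x^{\alpha}})$. Reading this off against the formula that puts $g_{i}h_{\alpha(i)}$ in slot $i$, under the index matching $x^{\alpha}\leftrightarrow\alpha(i)$, yields exactly $\psi(gh)=\psi(g)\psi(h)$. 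I expect this bookkeeping to be the main obstacle: the order of composition inside each section and the matching of the twisted index must be tracked carefully, since the opposite convention silently produces an anti-homomorphism rather than a homomorphism.

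Finally I would establish bijectivity. Injectivity is immediate: if $\psi(g)=\psi(h)$ then $g$ and $h$ have the same first-level permutation and the same sections, whence $g(xw)=\alpha(x)\,g|_{x}(w)=h(xw)$ for every nonempty word and $g(\emptyset)=\emptyset=h(\emptyset)$, so $g=h$. For surjectivity, given an arbitrary $(f_{1},\ldots,f_{d})\alpha$ I would define $g$ on $X^{*}$ by $g(\emptyset)=\emptyset$ and $g(x_{i}w)=x_{i}^{\alpha}\,f_{i}(w)$, verify that $g\in Aut(X^{*})$ exactly as in the second step, and observe that $\psi(g)=(f_{1},\ldots,f_{d})\alpha$ by construction. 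Together these steps show that $\psi$ is an isomorphism.
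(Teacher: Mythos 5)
Your proof is correct, but there is nothing in the paper to compare it against: the paper imports this proposition verbatim from \cite{Nekrashevych} as background and supplies no proof of its own, so your argument should be measured against the standard one, which it matches. You exhibit the sections via $g(xw)=\alpha(x)\,g|_{x}(w)$, check they are automorphisms, verify the multiplicativity $(gh)|_{x}=(g|_{x})(h|_{x^{\alpha}})$, and build the inverse map for surjectivity --- all sound, including the small justifications (levels are preserved since a root-fixing graph automorphism preserves distance to the root, and $g|_{x}^{-1}=(g^{-1})|_{\alpha(x)}$). The one place where care is genuinely needed is the convention issue you flag: the paper's multiplication rule $(g_{1},\ldots,g_{d})\alpha(h_{1},\ldots,h_{d})\beta=(g_{1}h_{\alpha(1)},\ldots,g_{d}h_{\alpha(d)})\alpha\beta$ is the right-action formula (as the paper's Definition of the wreath product says, $H$ acts from the right), while the paper elsewhere writes automorphisms on the left, e.g. $g(v)$; your choice to read $gh$ as ``apply $g$, then $h$'' is exactly what makes $\psi$ a homomorphism rather than an anti-homomorphism with respect to that formula, and your index matching $x_{\alpha(i)}\leftrightarrow x_{i}^{\alpha}$ is the right bookkeeping. (This mismatch of conventions is invisible in the paper's own computations, such as $a^{p}=(a,\ldots,a)$, because the adding machine group is abelian.) Two points you state tersely but which deserve one line each in a polished write-up: that $g$ preserves the prefix relation (an easy induction: $g(vx)$ is adjacent to $g(v)$ and one level lower in the tree, hence of the form $g(v)y$), which is what legitimizes defining $g|_{x}$ at all; and, in the surjectivity step, that the constructed $g$ is bijective because $\alpha$ and each $f_{i}$ are.
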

Thus, $g\in Aut(X^{\ast })$ is identified with the image $\psi(g)\in Aut(X^{\ast})\wr S(X)$ and it is written as
\begin{equation*}
g=(g|_{x_{1}},g|_{x_{2}},...,g|_{x_{d}})\alpha.
\end{equation*}

\noindent
\textit{The adding machine group:}
Let $a$ be the transformation on $X^{\ast }$ defined by the wreath recursion%
\begin{equation*}
a=(\underset{p-1 \ \text{times}}{\underbrace{1,1,\ldots,1}},a)\sigma
\end{equation*}%
where $\sigma =(012\ldots (p-1))$ is an element of the symmetric group on $X=\{0,1,2,\ldots,(p-1)\}$. The transformation $a$ generates an infinite cyclic group on $X^{\ast }$. This group is called the adding machine group and we denote this group by $A$.
\begin{figure}[h]
\centering \includegraphics[scale=0.75]{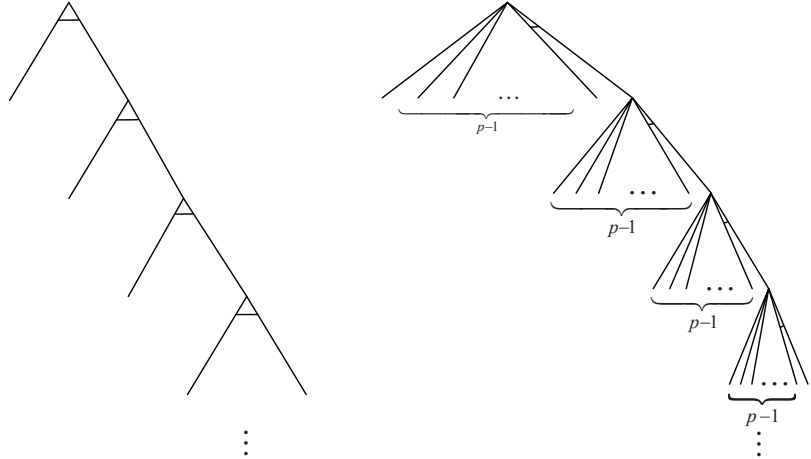}
\caption{Portrait of the transformation $a$ for $X=\{0,1\}$ and $X=\{0,1,...,p-1\}$}
\end{figure}

For example, using permutational wreath product we obtain that%
\begin{equation*}
\begin{array}{ccl}
a^{p} & = & (1,\ldots,1,a)\sigma (1,\ldots,1,a)\sigma \ldots(1,\ldots,1,a)\sigma \\
& = & (a,a,\ldots,a)\sigma ^{p} \\
& = & (a,a,\ldots,a)%
\end{array}%
\end{equation*}%
(for details see \cite{LGN}, \cite{Nekrashevych}).
\begin{center}
\renewcommand{\thefigure}{\arabic{section}.\arabic{figure}} \setcounter{figure}{0}
\renewcommand{\therc}{\arabic{section}.\arabic{rc}} \setcounter{rc}{0}

\section{The Metric Space ($Aut(X^{\ast })$, $d$)}
\end{center}
We define a natural metric on the automorphism group of the $p-$ary rooted tree $X^{\ast}$
where $X=\{0,1,2,...,p-1\}$. This metric is used by \cite{Sunic}.
\begin{definition}
\label{metric} Let $g_{1},g_{2}\in Aut(X^{\ast })$.%
\begin{equation*}
d(g_{1},g_{2})=\left\{
\begin{array}{lll}
\frac{1}{p^{k}} &  & \text{for} \ g_{1}^{-1}g_{2}\in St_{Aut(X^{\ast })}(k) \ \text{and} \ g_{1}^{-1}g_{2}\notin
St_{Aut(X^{\ast })}(k+1), \\
0 &  & \text{for} \ g_{1}=g_{2}.%
\end{array}%
\right.
\end{equation*}%
In other words, if $g_{1}$ and $g_{2}$ agree on all vertices of level $k$
but do not agree at least one vertex of level $(k+1)$ of the tree $X^{\ast }$
then the distance between $g_{1}$ and $g_{2}$ is $\frac{1}{p^{k}}$.
\end{definition}

($Aut(X^{\ast })$, $d$) is a metric space. Moreover, it can easily be
shown that the metric space ($Aut(X^{\ast })$, $d$) is compact.

\begin{proposition}
$Aut(X^{\ast })$ is a topological group.
\end{proposition}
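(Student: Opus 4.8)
The plan is to verify the two defining continuity requirements for a topological group, namely that the multiplication map $(g,h)\mapsto gh$ and the inversion map $g\mapsto g^{-1}$ are continuous with respect to the metric $d$ of Definition \ref{metric}. The whole argument will rest on a single structural observation: for each $k$, the level stabilizer $St_{Aut(X^{\ast})}(k)=\bigcap_{v\in X^{k}}G_{v}$ is a \emph{normal} subgroup of $Aut(X^{\ast})$. Indeed, since every automorphism preserves the root and adjacency it preserves levels, so $Aut(X^{\ast})$ acts by permutations on the finite set $X^{k}$, and $St_{Aut(X^{\ast})}(k)$ is precisely the kernel of this action; being a kernel, it is normal.

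First I would use normality to show that $d$ is bi-invariant. Left invariance is immediate and needs no normality: for any $f$ we have $(fg_{1})^{-1}(fg_{2})=g_{1}^{-1}g_{2}$, so $d(fg_{1},fg_{2})=d(g_{1},g_{2})$. For right invariance, $(g_{1}f)^{-1}(g_{2}f)=f^{-1}(g_{1}^{-1}g_{2})f$ is a conjugate of $g_{1}^{-1}g_{2}$, and since each $St_{Aut(X^{\ast})}(k)$ is normal we get $f^{-1}(g_{1}^{-1}g_{2})f\in St_{Aut(X^{\ast})}(k)$ if and only if $g_{1}^{-1}g_{2}\in St_{Aut(X^{\ast})}(k)$; hence $d(g_{1}f,g_{2}f)=d(g_{1},g_{2})$. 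Combining the two, $d$ is bi-invariant.

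With bi-invariance in hand the rest is routine. For inversion, note that $g_{1}^{-1}g_{2}$ lies in the subgroup $St_{Aut(X^{\ast})}(k)$ if and only if its inverse $g_{2}^{-1}g_{1}$ does, and $g_{1}g_{2}^{-1}=g_{1}(g_{2}^{-1}g_{1})g_{1}^{-1}$ is a conjugate of the latter, so by normality again $d(g_{1}^{-1},g_{2}^{-1})=d(g_{1},g_{2})$; thus inversion is an isometry and in particular continuous. For multiplication I would first record that $d$ is an ultrametric: if $g_{1}^{-1}g_{2}$ and $g_{2}^{-1}g_{3}$ both lie in $St_{Aut(X^{\ast})}(k)$ then so does their product $g_{1}^{-1}g_{3}$, giving $d(g_{1},g_{3})\le\max\{d(g_{1},g_{2}),d(g_{2},g_{3})\}$. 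Then bi-invariance yields
\[
d(g_{1}h_{1},g_{2}h_{2})\le\max\{d(g_{1}h_{1},g_{2}h_{1}),d(g_{2}h_{1},g_{2}h_{2})\}=\max\{d(g_{1},g_{2}),d(h_{1},h_{2})\},
\]
which makes multiplication (uniformly) continuous, completing the proof.

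The step I expect to be the main obstacle is the right-invariance of $d$: unlike left invariance, it does not follow from the bare group/subgroup structure but genuinely requires that the conjugate $f^{-1}(g_{1}^{-1}g_{2})f$ remain in the same level stabilizer. Everything therefore hinges on correctly identifying $St_{Aut(X^{\ast})}(k)$ as the kernel of the level-$k$ action, and hence as a normal subgroup; once that is secured, continuity of both operations is essentially automatic.
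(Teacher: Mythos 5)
Your proof is correct, but it takes a genuinely different route from the paper's. The paper checks continuity of multiplication and inversion pointwise by an $\varepsilon$--$\delta$ argument: given $(g_{0},h_{0})$ and a ball $B\big(g_{0}h_{0},\frac{1}{p^{n}}\big)$, it uses the product of balls $B\big(g_{0},\frac{1}{p^{n}}\big)\times B\big(h_{0},\frac{1}{p^{n}}\big)$ and verifies directly that $(gh)^{-1}g_{0}h_{0}=h^{-1}(g^{-1}g_{0})h_{0}\in St_{Aut(X^{\ast })}(n+1)$, with a parallel computation for inversion. You instead establish structural facts once and for all: $St_{Aut(X^{\ast })}(k)$ is the kernel of the level-$k$ action, hence normal; the metric is bi-invariant; inversion is an isometry; and $d$ is an ultrametric, whence $d(g_{1}h_{1},g_{2}h_{2})\le\max\{d(g_{1},g_{2}),d(h_{1},h_{2})\}$ makes multiplication uniformly continuous. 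It is worth observing that the paper's decisive step is not free of your normality lemma: to see $h^{-1}(g^{-1}g_{0})h_{0}\in St_{Aut(X^{\ast })}(n+1)$ one factors it as $\big[h^{-1}(g^{-1}g_{0})h\big]\big[h^{-1}h_{0}\big]$, and the first factor is a conjugate, so the paper tacitly uses the same normality; likewise its inversion step passes from $g^{-1}g_{0}\in St_{Aut(X^{\ast })}(n+1)$ to $gg_{0}^{-1}\in St_{Aut(X^{\ast })}(n+1)$, again a conjugate of an inverse. So your approach isolates and justifies the one nontrivial ingredient the paper leaves implicit, and it buys strictly more than the statement requires (bi-invariance, inversion isometric, multiplication $1$-Lipschitz for the max metric), while the paper's argument is more elementary in form but hides this point inside an unexplained inclusion.
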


\begin{proof}
First we prove that
\begin{equation*}
\begin{array}{ccccc}
\psi  & : & Aut(X^{\ast }) \times Aut(X^{\ast })& \longrightarrow  & Aut(X^{\ast }) \\
&  & (g,h) & \longmapsto  & gh %
\end{array}%
\end{equation*}%
is a continuous map. We take an arbitrary $(g_{0},h_{0})\in Aut(X^{\ast })\times Aut(X^{\ast })$. Let $U$ be a neighborhood of $g_{0}h_{0}$. There exists an integer $n$ such that
\begin{equation*}
B\Big(g_{0}h_{0},\frac{1}{p^{n}}\Big)=\Big\{f\mid d(f,g_{0}h_{0})< \frac{1}{p^{n}}\Big\}\subseteq U.
\end{equation*}
We take an open set
\begin{equation*}
V=V_{1}\times V_{2}=\{(g,h) \ | \ g\in V_{1}, h\in V_{2} \}
\end{equation*}
of $Aut(X^{\ast })\times Aut(X^{\ast })$ such that
\begin{equation*}
V_{1}=B\Big(g_{0},\frac{1}{p^{n}}\Big)=\Big\{g \ | \ d(g,g_{0})<\frac{1}{p^{n}}\Big\}
\end{equation*}
and
\begin{equation*}
V_{2}=B\Big(h_{0},\frac{1}{p^{n}}\Big)=\Big\{h \ | \ d(h,h_{0})<\frac{1}{p^{n}}\Big\}.
\end{equation*}
Now, we show that  $\psi(V)\subseteq U$ where
\begin{equation*}
\psi(V)=\psi(V_{1}\times V_{2})=\{gh \ | \ g\in V_{1}, h\in V_{2} \}.
\end{equation*}
Let $gh\in \psi(V)$. Thus, we have $g\in V_{1}$ and $h\in V_{2}$. Namely, we obtain that
\begin{equation}
\label{denk}
g^{-1}g_{0}\in St_{Aut(X^{\ast })}({n+1}) \ and \ h^{-1}h_{0}\in St_{Aut(X^{\ast })}({n+1}).
\end{equation}
Furthermore, we get
\begin{equation*}
(gh)^{-1}g_{0}h_{0}=h^{-1}(g^{-1}g_{0})h_{0}\in St_{Aut(X^{\ast })}({n+1}).
\end{equation*}
From (\ref {denk}), $gh\in U$. Thus, $\psi$ is continuous.
Similarly, we prove that
\begin{equation*}
\begin{array}{ccccc}
\varphi  & : & Aut(X^{\ast }) & \longrightarrow  & Aut(X^{\ast }) \\
&  & g & \longmapsto  & g^{-1} %
\end{array}%
\end{equation*}%
is continuous. We take an arbitrary $g_{0}\in Aut(X^{\ast })$. Let $U$ be a neighborhood of $g_{0}^{-1}$. So, there exists an integer $n$ such that
\begin{equation*}
B\Big(g_{0}^{-1},\frac{1}{p^{n}}\Big)=\Big\{f \ | \ d(f,g_{0}^{-1})<\frac{1}{p^{n}}\Big\}\subseteq U.
\end{equation*}
We take a neighborhood $V$ of $g_{0}$ in  $Aut(X^{\ast })$ such that
\begin{equation*}
V = B\Big(g_{0},\frac{1}{p^{n}}\Big)=\Big\{g \ | \ d(g,g_{0})< \frac{1}{p^{n}}\Big\}.
\end{equation*}
Now, we show that $\varphi(V)\subseteq U$. Let $g^{-1}\in \varphi(V)$. Thus, we have $g\in V$. In other words,
\begin{equation*}
gg_{0}^{-1}\in St_{Aut(X^{\ast })}({n+1}).
\end{equation*}
Due to the definition of $U$, $g^{-1}\in U$. That is, $\varphi$ is continuous.
\end{proof}

\begin{proposition}
$\overline{A}$ is a subgroup of $Aut(X^{\ast })$.
\end{proposition}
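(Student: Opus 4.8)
The plan is to exploit the fact, just established, that $(Aut(X^{\ast}),d)$ is a topological group, together with the standard principle that the closure of a subgroup of a topological group is again a subgroup. Since $d$ is a metric, I would prefer to argue sequentially, which is both concrete and sufficient here: the closure $\overline{A}$ consists exactly of the limits of convergent sequences drawn from $A$. Moreover, $\overline{A}$ is nonempty and contains the identity, since $1\in A\subseteq\overline{A}$.

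First, to show that $\overline{A}$ is closed under the group operation, I would take $g,h\in\overline{A}$ and choose sequences $(g_{n})$ and $(h_{n})$ in $A$ with $g_{n}\to g$ and $h_{n}\to h$. Because the multiplication map $\psi$ is continuous (by the previous proposition), the convergence $(g_{n},h_{n})\to(g,h)$ in the product forces $g_{n}h_{n}\to gh$. But $A$ is a group, so $g_{n}h_{n}\in A$ for every $n$; hence $gh$ is a limit of elements of $A$, that is, $gh\in\overline{A}$.

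For inverses, I would take $g\in\overline{A}$ and a sequence $(g_{n})$ in $A$ with $g_{n}\to g$. Continuity of the inversion map $\varphi$ gives $g_{n}^{-1}\to g^{-1}$, and $g_{n}^{-1}\in A$ since $A$ is a group; therefore $g^{-1}\in\overline{A}$. Together with the associativity and identity inherited from $Aut(X^{\ast})$, these two closure properties establish that $\overline{A}$ is a subgroup.

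I do not expect a genuine obstacle here, as this is essentially the general topological-group fact applied to our metric. The only point requiring a little care is the justification that $(g_{n},h_{n})\to(g,h)$ in the product topology---equivalently that the closure of $A\times A$ equals $\overline{A}\times\overline{A}$---but in a metric space this is immediate from coordinatewise convergence. An alternative, purely topological route would be to invoke $\psi(\overline{A}\times\overline{A})=\psi(\overline{A\times A})\subseteq\overline{\psi(A\times A)}=\overline{A}$, together with the analogous inclusion $\varphi(\overline{A})\subseteq\overline{\varphi(A)}=\overline{A}$ for inversion, using that the continuous image of a closure is contained in the closure of the image; I would mention this as a remark but carry out the sequential version in full.
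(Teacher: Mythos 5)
Your proof is correct and follows essentially the same route as the paper's: pick sequences $(g_{n})$, $(h_{n})$ in $A$ converging to $g,h\in\overline{A}$, then use the continuity of multiplication and inversion established in the previous proposition to conclude $gh,g^{-1}\in\overline{A}$. Your added remarks on nonemptiness, coordinatewise convergence in the product, and the alternative closure-inclusion argument are sound refinements but do not change the substance of the argument.
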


\begin{proof}
We show that  $gh\in \overline{A}$ and $g^{-1}\in \overline{A}$ for all $g,h\in \overline{A}$.

Suppose that $g,h\in \overline{A}$. This means that there are sequences $(g_{n})$, $(h_{n})$ in $A$ such that
\begin{equation*}
  \lim_{n\rightarrow \infty}g_{n}=g \ \text{and} \ \lim_{n\rightarrow \infty}h_{n}=h.
\end{equation*}
Thus, it follows that $\lim_{n\rightarrow \infty}(g_{n},h_{n})=(g,h)$. On the other hand, we proved that
\begin{equation*}
\begin{array}{ccccc}
\psi  & : & Aut(X^{\ast }) \times Aut(X^{\ast })& \longrightarrow  & Aut(X^{\ast }) \\
&  & (g,h) & \longmapsto  & gh %
\end{array}%
\end{equation*}%
is continuous. Hence, we have
\begin{equation*}
\lim_{n\rightarrow \infty}g_{n}h_{n}=gh.
\end{equation*}
It follows that $gh\in\overline{A}$ since the sequence $g_{n}h_{n}\in A$. Similarly, because
\begin{equation*}
\begin{array}{ccccc}
\varphi  & : & Aut(X^{\ast }) & \longrightarrow  & Aut(X^{\ast }) \\
&  & g & \longmapsto  & g^{-1} %
\end{array}%
\end{equation*}%
is continuous we obtain
\begin{equation*}
\lim_{n\rightarrow \infty}g_{n}^{-1}=g^{-1}.
\end{equation*}
That is, $g^{-1}\in \overline{A}$. Thus, $\overline{A}$ is a subgroup of $Aut(X^{\ast })$.
\end{proof}
\begin{center}
\renewcommand{\thefigure}{\arabic{section}.\arabic{figure}} \setcounter{figure}{0}
\renewcommand{\therc}{\arabic{section}.\arabic{rc}} \setcounter{rc}{0}

\section{Embedding of the Group of $p-$adic Integers into the Automorphism Group of the $p-ary$ Rooted Tree}
\end{center}
Now we give a formula for the distance between two elements of the adding machine group. Notice that this
expression is similar to the distance between two elements of $p-$adic integers.
\begin{proposition}
\label{psel}
For $a^{n},a^{m}\in A$, the distance $d(a^{n},a^{m})$ is
\begin{equation*}
\begin{array}{lllll}
d & : & A\times A & \rightarrow & A \\
&  &  (a^{n},a^{m}) & \mapsto & d(a^{n},a^{m})=\left\{
\begin{array}{lll}
0 &  &  \text{for} \ n=m,\\
\frac{1}{p^{k}} &  & \text{for}  \ n-m=tp^{k},%
\end{array}%
\right.%
\end{array}%
\end{equation*}%
where $t,k\in\mathbb{Z}$, $p$ is prime number and $(p,t)=1.$
\end{proposition}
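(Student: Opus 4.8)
The plan is to work directly with the quantity $g_{1}^{-1}g_{2}=(a^{n})^{-1}a^{m}=a^{m-n}$ that governs the metric in Definition~\ref{metric}: by definition $d(a^{n},a^{m})=1/p^{k}$ precisely when $a^{m-n}$ lies in $St_{Aut(X^{\ast})}(k)$ but not in $St_{Aut(X^{\ast})}(k+1)$. Writing $s=m-n$, the case $s=0$ gives the identity and distance $0$, so I would assume $s\neq 0$ and set $s=tp^{k}$ with $(t,p)=1$. Note that $m-n$ and $n-m$ differ only in the sign of $t$, and since fixing a vertex is symmetric under inversion this sign will never affect stabilizer membership; thus the hypothesis $n-m=tp^{k}$ is the same data.

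First I would record the key recursion. From the displayed identity $a^{p}=(a,a,\ldots,a)$ (with trivial root permutation) a one-line induction gives
\[
a^{p^{k}}=(\,a^{p^{k-1}},a^{p^{k-1}},\ldots,a^{p^{k-1}}\,),
\]
again with trivial root permutation, because once the top permutation is the identity, raising to a power acts coordinatewise via the wreath multiplication rule. Iterating the section map down $k$ levels then yields, for every vertex $v\in X^{k}$, the section formula $a^{p^{k}}|_{v}=a^{p^{0}}=a$; in particular $a^{p^{k}}$ fixes every vertex of level $k$, so $a^{p^{k}}\in St_{Aut(X^{\ast})}(k)$.

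Next I would pin down the exact stabilization level of $a^{s}=a^{tp^{k}}=(a^{p^{k}})^{t}$. Since $a^{p^{k}}\in St_{Aut(X^{\ast})}(k)$ and this stabilizer is a subgroup, $a^{s}\in St_{Aut(X^{\ast})}(k)$ as well, so $a^{s}$ agrees with the identity on all of level $k$. For the opposite direction I would use that restriction to a fixed vertex is a homomorphism on the vertex stabilizer: for any $v\in X^{k}$,
\[
a^{s}|_{v}=(a^{p^{k}}|_{v})^{t}=a^{t},
\]
which is legitimate because $a^{p^{k}}$ fixes $v$. Now $a^{t}$ acts on the first level of the subtree below $v$ as $\sigma^{t}$, and since $\sigma$ is a $p$-cycle with $(t,p)=1$ we have $\sigma^{t}\neq e$; hence $a^{s}$ moves the level-$(k+1)$ vertex $vx$ for some $x\in X$, and $a^{s}\notin St_{Aut(X^{\ast})}(k+1)$.

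Combining the two steps, $a^{m-n}\in St_{Aut(X^{\ast})}(k)\setminus St_{Aut(X^{\ast})}(k+1)$, so Definition~\ref{metric} yields $d(a^{n},a^{m})=1/p^{k}$, completing the proof. I expect the only genuine obstacle to be the careful bookkeeping of sections: verifying by induction that $a^{p^{k}}|_{v}=a$ for $v\in X^{k}$, and justifying that the restriction map satisfies $(a^{p^{k}})^{t}|_{v}=(a^{p^{k}}|_{v})^{t}$ because $a^{p^{k}}$ stabilizes $v$. Everything else is formal, and the coprimality hypothesis enters exactly once, at the single point where $\sigma^{t}\neq e$ forces motion on level $k+1$.
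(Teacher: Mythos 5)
Your proof is correct and takes essentially the same route as the paper's: both rest on the wreath recursion $a^{p^{k}}=(a^{p^{k-1}},a^{p^{k-1}},\ldots,a^{p^{k-1}})$ to place $a^{tp^{k}}$ in $St_{Aut(X^{\ast})}(k)$, and on $\sigma^{t}\neq e$ for $(p,t)=1$ to exclude it from $St_{Aut(X^{\ast})}(k+1)$. If anything, your section-homomorphism step $a^{tp^{k}}|_{v}=(a^{p^{k}}|_{v})^{t}=a^{t}$ makes explicit the exclusion argument that the paper compresses into the asserted identity $St_{A}(k)-St_{A}(k+1)=\{a^{tp^{k}}:(p,t)=1\}$.
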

\begin{proof}
First we compute $St_{A}(1)$. Using permutational wreath product we obtain that
\begin{equation*}
\begin{array}{ccl}
a^{p} & = & (1,1,\ldots,a)\sigma (1,1,\ldots,a)\sigma \ldots(1,1,\ldots,a)\sigma  \\
& = & (a,a,\ldots,a).%
\end{array}%
\end{equation*}
Thus, $St_{A}(1)=\langle a^{p} \rangle$.
Moreover, we get
\begin{equation*}
\begin{array}{ccl}
a^{p^{2}} & = & a^{p}a^{p}\ldots a^{p} \\
& = & (a,a,\ldots,a)(a,a,\ldots,a)\ldots(a,a,\ldots,a) \\
& = & (a^{p},a^{p},\ldots,a^{p})%
\end{array}%
\end{equation*}
We have $a^{p^{2}}\in St_{A}(2)$ because $a^{p}\in St_{A}(1)$. Therefore, $St_{A}(2)=\langle a^{p^{2}} \rangle$. By proceeding in a similar manner, we compute $St_{A}(k)=\langle a^{p^{k}} \rangle$.

So, elements of $A$ which are in $St_{A}(1)$ but are not in $St_{A}(2)$ can be expressed as
\begin{equation*}
St_{A}(1)-St_{A}(2)=\{a^{tp}:(p,t)=1 \}
\end{equation*}
and in general, we have
\begin{equation*}
St_{A}(k)-St_{A}(k+1)=\{a^{tp^{k}}:(p,t)=1\}.
\end{equation*}
Let us take arbitrary $a^{n},a^{m}\in A$. If $n=m$ then $a^{n}=a^{m}$ and $d(a^{n},a^{m})=0$.
Assume $n\neq m$. So there is a unique expression $n-m=tp^{k}$ such that $(p,t)=1$. Then we obtain
\begin{equation*}
a^{-m}a^{n}=a^{n-m}=a^{tp^{k}}\in St_{A}(k)-St_{A}(k+1)
\end{equation*}
and $d(a^{n},a^{m})=\frac{1}{p^{k}}$.
\end{proof}

\begin{proposition}
\label{yak}
Let $\sum_{i \geq 0} \alpha_{i}p^{i}\in \mathbb{Z}_{p}$. Then the sequence
\begin{equation*}
a^{\alpha_{0}}, a^{\alpha_{0}+\alpha_{1}p},
a^{\alpha_{0}+\alpha_{1}p+\alpha_{2}p^{2}},\ldots
\end{equation*}
is convergent.
\end{proposition}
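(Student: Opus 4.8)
The plan is to exploit the completeness of the metric space $(Aut(X^{\ast}),d)$, which is guaranteed by the compactness noted just after Definition \ref{metric}; since every compact metric space is complete, it will suffice to prove that the given sequence is Cauchy rather than to exhibit its limit directly. First I would fix the notation $s_{n}=\sum_{i=0}^{n}\alpha_{i}p^{i}$ for the $n$th partial sum, so that the sequence in question is precisely $\big(a^{s_{n}}\big)_{n\geq 0}$.

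Next, for indices $m>n$ I would estimate $d(a^{s_{m}},a^{s_{n}})$ by means of Proposition \ref{psel}. The relevant difference of exponents is
\[
s_{m}-s_{n}=\sum_{i=n+1}^{m}\alpha_{i}p^{i}=p^{n+1}\sum_{i=n+1}^{m}\alpha_{i}p^{\,i-n-1},
\]
which is manifestly divisible by $p^{n+1}$. If this difference is zero, then $a^{s_{m}}=a^{s_{n}}$ and the distance is $0$; otherwise, writing $s_{m}-s_{n}=tp^{k}$ with $(p,t)=1$ forces $k\geq n+1$, and Proposition \ref{psel} yields the uniform bound $d(a^{s_{m}},a^{s_{n}})=\tfrac{1}{p^{k}}\leq\tfrac{1}{p^{\,n+1}}$.

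Finally I would conclude that $\big(a^{s_{n}}\big)$ is Cauchy: given $\varepsilon>0$, choosing $N$ with $\tfrac{1}{p^{\,N+1}}<\varepsilon$ ensures $d(a^{s_{m}},a^{s_{n}})\leq\tfrac{1}{p^{\,n+1}}<\varepsilon$ for all $m>n\geq N$, and hence by completeness the sequence converges in $Aut(X^{\ast})$. The only genuine content is the divisibility estimate $ord_{p}(s_{m}-s_{n})\geq n+1$; once that bound on the $p$-adic order of the exponent difference is established, the verification of the Cauchy condition is immediate. I therefore expect the main (and only) obstacle to be conceptual rather than computational: recognizing that one should establish convergence indirectly through the Cauchy criterion and the completeness of the space, instead of attempting to name the limiting automorphism explicitly.
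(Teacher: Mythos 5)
Your proof is correct and follows essentially the same route as the paper: establish the Cauchy property via Proposition \ref{psel} and invoke completeness of $Aut(X^{\ast})$ (via compactness) to obtain convergence. In fact your version is slightly more careful than the paper's, which asserts the equality $d(a^{s_{k}},a^{s_{l}})=\frac{1}{p^{l}}$ — this fails when $\alpha_{l+1}=\cdots=\alpha_{k}=0$ or when leading coefficients vanish, whereas your inequality $d(a^{s_{m}},a^{s_{n}})\leq \frac{1}{p^{n+1}}$, derived from $ord_{p}(s_{m}-s_{n})\geq n+1$, handles these cases and suffices for the Cauchy criterion.
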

\begin{proof}
For any $\varepsilon>0$, there is a positive integer $n_{0}$ such that $\frac{1}{p^{n_{0}}}<\varepsilon$. If $k>l$ and $k,l\geq n_{0}$ then it is obtained that
\begin{equation*}
d(a^{\alpha_{0}+\alpha_{1}p+...+\alpha_{k}p^{k}},a^{\alpha_{0}+\alpha_{1}p+\ldots+\alpha_{l}p^{l}})=\frac{1}{p^{l}}<\varepsilon.
\end{equation*}
from Proposition \ref{psel}. Thus, it is a Cauchy sequence. Because $Aut(X^{\ast })$ is a complete metric space, this sequence is convergent.
\end{proof}
Now we give our main proposition:
\begin{proposition}
{\normalsize \label{gomme} }
We define
{\normalsize
\begin{equation*}
\begin{array}{ccccc}
\varphi & : & \mathbb{Z}_{p} & \rightarrow & \overline{A} \\
\end{array}%
\end{equation*}
such that $\varphi(\sum_{i \geq 0} \alpha_{i}p^{i})$ is the limit of the sequence $a^{\alpha_{0}}, a^{\alpha_{0}+\alpha_{1}p},
a^{\alpha_{0}+\alpha_{1}p+\alpha_{2}p^{2}},\ldots$. Then $\varphi$ is both an isometry and a group isomorphism.}
\end{proposition}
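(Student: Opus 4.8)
The plan is to verify, in order, that $\varphi$ is well defined, that it is an isometry (whence injectivity is immediate), that it is a group homomorphism, and finally that it is onto $\overline{A}$. Well-definedness is essentially Proposition \ref{yak}: each $p$-adic integer has a \emph{unique} expansion $\sum_{i\ge 0}\alpha_i p^i$ with $\alpha_i\in\{0,\dots,p-1\}$, so the defining sequence is determined by the input and its limit exists in the complete space $Aut(X^{\ast})$. Throughout I would write $s_k=\sum_{i=0}^{k}\alpha_i p^i$ and $t_k=\sum_{i=0}^{k}\beta_i p^i$ for the partial sums of $x=\sum\alpha_i p^i$ and $y=\sum\beta_i p^i$, so that $\varphi(x)=\lim_k a^{s_k}$ and $\varphi(y)=\lim_k a^{t_k}$.

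For the isometry I would exploit that the metric $d$ is continuous, so $d(\varphi(x),\varphi(y))=\lim_k d(a^{s_k},a^{t_k})$. Assuming $x\neq y$, let $v$ be the first index with $\alpha_v\neq\beta_v$. For every $k\ge v$ one has $s_k-t_k=p^{v}\sum_{i=v}^{k}(\alpha_i-\beta_i)p^{\,i-v}$, and the remaining factor is $\equiv\alpha_v-\beta_v\not\equiv 0\pmod p$ since $\alpha_v,\beta_v\in\{0,\dots,p-1\}$ are distinct; hence $s_k-t_k=t\,p^{v}$ with $(p,t)=1$. Proposition \ref{psel} then yields $d(a^{s_k},a^{t_k})=p^{-v}$ for all $k\ge v$, so the limit is $p^{-v}$. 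The same digit computation shows $x-y=p^{v}u$ with $u$ a $p$-adic unit, i.e. $\mathrm{ord}_p(x-y)=v$, so $p^{-v}=|x-y|_p=d_p(x,y)$. Thus $\varphi$ is an isometry, and $\varphi(x)=\varphi(y)$ forces $d_p(x,y)=0$, giving injectivity.

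For the homomorphism property, write $z=x+y=\sum\gamma_i p^i$ with partial sums $r_k$. Using continuity of multiplication in the topological group $Aut(X^{\ast})$ together with the fact that $A$ is abelian, I get $\varphi(x)\varphi(y)=\lim_k a^{s_k}a^{t_k}=\lim_k a^{s_k+t_k}$. The defining congruence for $p$-adic addition gives $r_k\equiv s_k+t_k\pmod{p^{k+1}}$, so $r_k-(s_k+t_k)=w_k p^{k+1}$ for an integer $w_k$, and therefore $(a^{s_k+t_k})^{-1}a^{r_k}=a^{w_k p^{k+1}}\in St_A(k+1)=\langle a^{p^{k+1}}\rangle$. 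This forces $d(a^{s_k+t_k},a^{r_k})\le p^{-(k+1)}\to 0$, so $\lim_k a^{s_k+t_k}=\lim_k a^{r_k}=\varphi(z)$, i.e. $\varphi(x+y)=\varphi(x)\varphi(y)$.

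For surjectivity, note that the image $\varphi(\mathbb{Z}_p)$ contains every $a^n$ with $n\ge 0$ (the finite base-$p$ expansion of $n$ produces an eventually constant sequence with limit $a^n$), and being a homomorphic image it is a subgroup, hence contains all of $A$. Since $\mathbb{Z}_p$ is compact and $\varphi$ is continuous, $\varphi(\mathbb{Z}_p)$ is compact, thus closed in $Aut(X^{\ast})$, so a closed subgroup containing $A$ must contain $\overline{A}$; as each $\varphi(x)$ is a limit of elements of $A$ and so lies in $\overline{A}$, equality $\varphi(\mathbb{Z}_p)=\overline{A}$ follows. The hard part will be the homomorphism step: one must convert the carry-sensitive congruence defining $p$-adic addition into metric closeness through the level-stabilizer description $St_A(k+1)=\langle a^{p^{k+1}}\rangle$, and dually, in the isometry step, ensure that the carries occurring in $x-y$ do not perturb the order, which is precisely the mod-$p$ computation on the leading differing digit.
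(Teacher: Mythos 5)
Your proposal is correct, and in three of its four steps it travels the same road as the paper: well-definedness is delegated to Proposition \ref{yak}, the isometry is obtained from continuity of the metric together with the distance formula of Proposition \ref{psel} applied to the partial sums (the paper does exactly this, reading off the stabilized value $p^{-k}$ of the distances), and the homomorphism is obtained from continuity of multiplication in the topological group plus the observation that the carry makes $a^{s_k+t_k}$ and $a^{r_k}$ close. Your congruence formulation $r_k\equiv s_k+t_k\pmod{p^{k+1}}$, giving $(a^{s_k+t_k})^{-1}a^{r_k}\in St_A(k+1)$ and $d\le p^{-(k+1)}\to 0$, is actually a cleaner rendering of the paper's carry computation (the paper writes the carry identity as an equality of exponents and states the resulting distance somewhat imprecisely; your bound is the correct statement of what the argument needs). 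The genuine divergence is surjectivity. The paper argues combinatorially and constructively: given $a^{n_k}\to b$, it expands each $n_k$ in base $p$ and extracts, by pigeonhole, nested subsequences on which the digits stabilize level by level, producing $\beta=\sum_{i\ge 0}\beta_i p^i$ and a subsequence $(n_{k_s})$ whose first $s$ digits agree with those of $\beta$, so that $d(a^{\beta_0+\cdots+\beta_s p^s},a^{n_{k_s}})\to 0$ and $\varphi(\beta)=b$ via the triangle inequality. You instead give a soft topological argument: the image is a subgroup (using the homomorphism property, which you therefore correctly prove \emph{before} surjectivity, whereas the paper proves surjectivity first and so could not have argued this way in its own order), it contains $A$ since finite expansions yield $a^n$ for $n\ge 0$ and inverses give the rest, and it is closed because $\mathbb{Z}_p$ is compact and an isometry is continuous; a closed subgroup containing $A$ contains $\overline{A}$, and the reverse inclusion holds by construction. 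Both arguments are valid. The paper's buys explicitness --- it exhibits the digits of a preimage --- at the cost of delicate subsequence bookkeeping; yours buys brevity and robustness at the cost of importing compactness of $\mathbb{Z}_p$, a fact the paper never invokes (you could stay closer to its toolkit by noting instead that $\mathbb{Z}_p$ is complete and an isometric image of a complete space is closed).
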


{\normalsize
\begin{proof}
From Proposition \ref{yak}, $\varphi$ is well-defined.
Now we show that $\varphi$ is an isometry. In other words, we show that $d_{p}(\alpha, \beta)=d(\varphi(\alpha), \varphi(\beta))$ for every $ \alpha, \beta \in \mathbb{Z}_{p}$. Let $\alpha=\sum_{i \geq 0} \alpha_{i}p^{i}$ and $\beta=\sum_{i \geq 0} \beta_{i}p^{i}$.

If $d_{p}(\alpha, \beta)=0$ then we obtain $d(\varphi(\alpha), \varphi(\beta))=0$ since $\alpha_{i}=\beta_{i}$ for $i=0,1,2,\ldots$.

If $d_{p}(\alpha, \beta)=\frac{1}{p^{k}}$ then $\alpha_{i}=\beta_{i}$ for $i< k$ and $\alpha_{k}\neq \beta_{k}$. We must show that $d(\varphi(\alpha), \varphi(\beta))=\frac{1}{p^{k}}$. Because $\varphi(\alpha)$ and $\varphi(\beta)$ are the limits of the sequences $a^{\alpha_{0}}, a^{\alpha_{0}+\alpha_{1}p}, a^{\alpha_{0}+\alpha_{1}p+\alpha_{2}p^{2}},\ldots$ and  $a^{\beta_{0}}, a^{\beta_{0}+\beta_{1}p}, a^{\beta_{0}+\beta_{1}p+\beta_{2}p^{2}},\ldots$ respectively, it is obtained that
\begin{equation*}
\lim_{k\rightarrow\infty}(a^{\alpha_{0}+\alpha_{1}p+...+\alpha_{k}p^{k}},a^{\beta_{0}+\beta_{1}p+...+\beta_{k}p^{k}})=(\varphi(\alpha),\varphi(\beta)).
\end{equation*}
Since any metric function is continuous,
\begin{equation*}
d(a^{\alpha_{0}},a^{\beta_{0}}), d(a^{\alpha_{0}+\alpha_{1}p},a^{\beta_{0}+\beta_{1}p}),\ldots\rightarrow d(\varphi(\alpha),\varphi(\beta)).
\end{equation*}
From Proposition \ref{psel}, we get
\begin{equation*}
0,0,...,0,\frac{1}{p^{k}},\frac{1}{p^{k}},\ldots,\frac{1}{p^{k}},\ldots\rightarrow \frac{1}{p^{k}}.
\end{equation*}
So, we get $d(\varphi(\alpha),\varphi(\beta))=\frac{1}{p^{k}}$. Namely, $\varphi$ is an isometry map.

Moreover, $\varphi$ is injective since $\varphi$ is an isometry map.

Now we show that $\varphi$ is surjective. Let $b\in \overline{A}$ be arbitrary. Thus, there exists a sequence
\begin{equation*}
\label{dizi}
a^{n_{0}},a^{n_{1}},\ldots,a^{n_{k}},\ldots\rightarrow b
\end{equation*}
whose elements are in $A$. Furthermore, every integer $n_{k}$ can be expressed in $\mathbb{Z}_{p}$ as
\begin{equation}
\label{i}
\begin{array}{ccc}
n_{0} & = & \alpha _{0}^{0}+\alpha _{1}^{0}p+\alpha _{2}^{0}p^{2}+\ldots \\
n_{1} & = & \alpha _{0}^{1}+\alpha _{1}^{1}p+\alpha _{2}^{1}p^{2}+\ldots \\
  & \vdots &  \\
n_{k} & = & \alpha _{0}^{k}+\alpha _{1}^{k}p+\alpha _{2}^{k}p^{2}+\ldots \\
 & \vdots &
\end{array}%
\end{equation}
At least one of the numbers $0,1,2,...,(p-1)$ occurs infinitely many times in the sequence $(\alpha_{0}^{k})_{k}$. We choose one of them and denote it by $\beta_{0}$. Let $(\alpha_{1}^{k_{l}})_{l}$ be a subsequence of $(\alpha_{1}^{k})_{k}$ such that $\alpha_{0}^{k_{l}}=\beta_{0}$ for $l=0,1,2,\ldots$. Similarly, we denote by $\beta_{1}$, any one of the numbers that appears infinitely many times in the sequence $(\alpha_{1}^{k_{l}})_{l}$.
Proceeding in this manner, we obtain a sequence
\begin{equation*}
\label{dizi2}
a^{\beta_{0}},a^{\beta_{0}+\beta_{1}p},\ldots,a^{\beta_{0}+\beta_{1}p+\ldots+\beta_{k}p^{k}},\ldots.
\end{equation*}
From Proposition \ref{yak}, this sequence is convergent. Now we show this sequence converges to $b$. Due to the construction of (\ref{i}), there exists a subsequence $(n_{k_{s}})$ of the sequence $({n_{k}})$ whose $p-$adic expression of term $s$th such that
\begin{equation*}
\beta_{0}+\beta_{1}p+\beta_{2}p^{2}+\ldots+\beta_{s}p^{s}+\gamma_{s+1}p^{s+1}+\gamma_{s+2}p^{s+2}+\ldots
\end{equation*}
Hence, because
\begin{equation*}
\lim_{s\rightarrow\infty}d(a^{\beta_{0}+\beta_{1}p+\ldots+\beta_{s}p^{s}},a^{n_{k_{s}}})=0
\end{equation*}
and from the triangle inequality, the sequence $(a^{\beta_{0}+\beta_{1}p+\ldots+\beta_{k}p^{k}})$ converges to $b$. So, $\varphi(\sum_{i\geq0}\beta_{i}p^{i})=b$ and $\varphi$ is surjective.

Finally, we prove that $\varphi$ is a homomorphism. In other words, we prove that
\begin{equation*}
\varphi(\alpha+\beta)=\varphi(\alpha)\varphi(\beta)
\end{equation*}
for every $\alpha,\beta\in\mathbb{Z}_{p}$. Let $\alpha=\alpha _{0}+\alpha _{1}p+\alpha _{2}p^{2}+\ldots$, $\beta=\beta_{0}+\beta_{1}p+\beta_{2}p^{2}+\ldots$ and
\begin{equation*}
\alpha+\beta=\gamma_{0}+\gamma_{1}p+\gamma_{2}p^{2}+\ldots.
\end{equation*}
From the definition of $\varphi$,
\begin{equation*}
  a^{\gamma_{0}},a^{\gamma_{0}+\gamma_{1}p},a^{\gamma_{0}+\gamma_{1}p+\gamma_{2}p^{2}},...\rightarrow \varphi(\alpha+\beta).
\end{equation*}
Moreover, it follows that
\begin{equation*}
 a^{(\alpha_{0}+\beta_{0})},a^{(\alpha_{0}+\beta_{0})+(\alpha_{1}+\beta_{1})p},a^{(\alpha_{0}+\beta_{0})+(\alpha_{1}+\beta_{1})p+(\alpha_{2}+\beta_{2})p^{2}},\ldots\rightarrow \varphi(\alpha)\varphi(\beta)
\end{equation*}
since $Aut(X^{*})$ is a topological group,
\begin{equation*}
 a^{\alpha_{0}},a^{\alpha_{0}+\alpha_{1}p},a^{\alpha_{0}+\alpha_{1}p+\alpha_{2}p^{2}},\ldots\rightarrow \varphi(\alpha)
\end{equation*}
and
\begin{equation*}
 a^{\beta_{0}},a^{\beta_{0}+\beta_{1}p},a^{\beta_{0}+\beta_{1}p+\beta_{2}p^{2}},\ldots\rightarrow \varphi(\beta).
\end{equation*}
In $\mathbb{Z}_{p}$,
\begin{equation*}
\begin{array}{cclcl}
  \alpha _{0}+\beta_{0} & = & \gamma _{0}+ \overline{\gamma_{0}}p+0p^{2}+0p^{3}+\ldots \\
 \alpha _{0}+\beta_{0} + (\alpha _{1}+\beta_{1})p & = & \gamma _{0}+\gamma _{1}p+\overline{\gamma_{1}}p^{2}+0p^{3}+0p^{4}+\ldots \\
  &  \vdots  &  & &   \\
  \alpha _{0}+\beta_{0} +\ldots+(\alpha _{k}+\beta_{k})p^{k} & = & \gamma _{0}+\gamma _{1}p+\ldots+\gamma _{k}p^{k}+\overline{\gamma_{k}}p^{k+1}+0p^{k+2}+0p^{k+3}+\ldots.\\
  &\vdots & & &
\end{array}%
\end{equation*}
Let $x= \alpha _{0}+\beta_{0} +\ldots+(\alpha _{k}+\beta_{k})p^{k}$ and $y=\gamma _{0}+\gamma _{1}p+\ldots+\gamma _{k}p^{k}+\overline{\gamma_{k}}p^{k+1}+0p^{k+2}+0p^{k+3}+\ldots$. Then we have
\begin{equation*}
d(a^{x}, a^{y})=\left\{
\begin{array}{lll}
\frac{1}{p^{k}} &  &\text{if} \ \overline{\gamma_{k}}\neq 0, \\
0 &  &\text{if} \ \overline{\gamma_{k}}=0.%
\end{array}%
\right.
\end{equation*}%
So we get $\varphi(\alpha+\beta)=\varphi(\alpha)\varphi(\beta)$ since
\begin{equation*}
d(a^{\alpha _{0}+\beta_{0}},a^{\gamma _{0}}),d(a^{\alpha _{0}+\beta_{0} + (\alpha _{1}+\beta_{1})p},a^{\gamma _{0}+\gamma _{1}p}),\ldots\rightarrow d(\varphi(\alpha)\varphi(\beta), \varphi(\alpha+\beta))
\end{equation*}
and
\begin{equation*}
\lim_{k\rightarrow \infty}d(a^{x}, a^{y})=0.
\end{equation*}
Thus the proof is completed.
\end{proof}
}
Consequently, the group of $p-$adic integers $\mathbb{Z}_{p}$ can be isometrically embedded into the metric space $Aut(X^{*})$ since $\overline{A}\subseteq Aut(X^{*})$.
\begin{example}
We show $\varphi(-1)$ for $p=2$ in Figure \ref{aaa}. It is well-known that
\begin{equation*}
-1=1+1.2^{1}+1.2^{2}+...+1.2^{k}+...\in \mathbb{Z}_{2}.
\end{equation*}
Due to the definition of $\varphi$, $\varphi(-1)$ is the limit of the sequence
\begin{equation*}
a^{1},a^{1+1.2^{1}},a^{1+1.2^{1}+1.2^{2}},...
\end{equation*}
in $A$ for $X=\{0,1\}$. This limit equals to $a^{-1}=(a^{-1},1)\sigma$ because of Proposition \ref{psel}.

\begin{figure}[h]

\centering \includegraphics[scale=0.70]{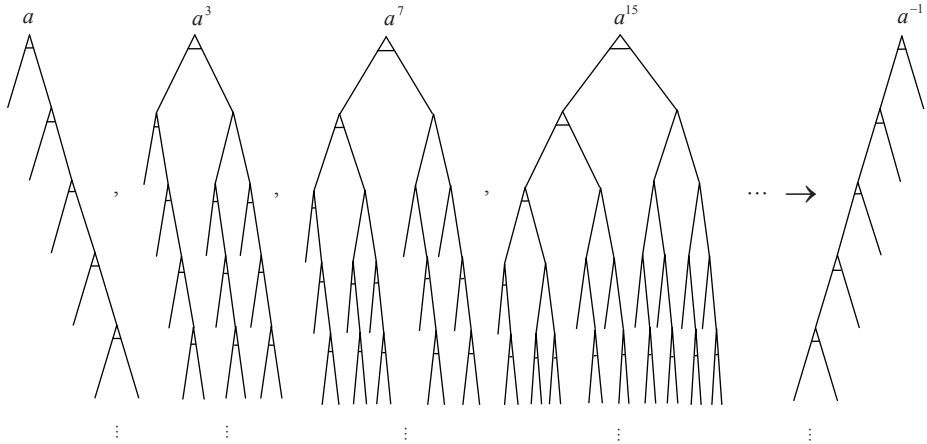}
\caption{The image of $-1\in \mathbb{Z}_{2}$ under the map $\varphi$}
\label{aaa}
\end{figure}
\end{example}

\end{document}